\newtheorem{Theorem}{Theorem}
\newtheorem{Corollary}{Corollary}
\newcommand{\vol}{\text{vol}}
\newcommand{\conv}{\text{conv}}
\newcommand{\area}{\text{area}}
\begin{document}
\title{Upper Bounds for Non-Congruent Sphere Packings}
\author{Samuel Reid\thanks{University of Calgary, Centre for Computational and Discrete Geometry (Department of Mathematics \& Statistics), Calgary, AB, Canada. $\mathsf{e-mail: smrei@ucalgary.ca}$}}
\maketitle

\begin{abstract}
We prove upper bounds on the average kissing number $k(\mathcal{P})$ and contact number $C(\mathcal{P})$
of an arbitrary finite non-congruent sphere packing $\mathcal{P}$, and prove an upper bound on the packing density
$\delta(\mathcal{P})$ of an arbitrary infinite non-congruent sphere packing $\mathcal{P}$.
\end{abstract}

\textbf{Keywords:} average kissing number, contact number, packing density, non-congruent sphere packing, linear programming bounds, upper bounds. \\
\text{   \;\;       } \textbf{MSC 2010 Subject Classifications:} Primary 52C17, Secondary 51F99.

\section{Lexicon}
Let
$$\mathcal{P} = \bigcup_{i=1}^{k} \bigcup_{j=1}^{n_{i}} \left(x_{ij} + r_{i}\mathbb{S}^{2}\right)$$
be an arbitrary non-congruent sphere packing. Then 
$$\|x_{ij} - x_{i'j'}\| \geq r_{i} + r_{i'}, \forall 1 \leq i,i' \leq k, j \neq j'$$
is a necessary condition required for the spheres to be non-overlapping. Hence, the vertex set and edge set of the
sphere packing $\mathcal{P}$ are
\begin{align*}
 V(\mathcal{P}) &= \{x_{ij} \in \mathbb{R}^{3} \; \big| \; 1 \leq i \leq k, 1 \leq j \leq n_{i}\} \\
 E(\mathcal{P}) &= \{(x_{ij},x_{i',j'}) \; \big| \; \|x_{ij}-x_{i'j'}\| = r_{i} + r_{i'}, x_{ij},x_{i',j'} \in V(\mathcal{P})\}
\end{align*}

The the average kissing number of $\mathcal{P}$ is $k(\mathcal{P}) = 2|E(\mathcal{P})|/n$,
where $n = \displaystyle \sum_{i=1}^{k} n_{i}$, and the contact number of $\mathcal{P}$ is $C(\mathcal{P})=|E(\mathcal{P})|$.

\section{Upper Bounds on Average Kissing Numbers and Contact Numbers of Sphere Packings}
Consider, as in Cohn-Zhao \cite{CohnZhao}, a continuous function $g:[-1,1] \rightarrow \mathbb{R}$ which is positive definite on $\mathbb{S}^2$ with 
$g(t) \leq 0, \forall t \in [-1,\cos\theta]$ and
$$\overline{g} = \frac{\int_{-1}^{1} g(t)(1-t^2)dt}{\int_{-1}^{1} (1-t^{2})dt}>0,$$
then $$g \in \mathcal{F}_{\theta}(\mathbb{S}^2).$$
Furthermore, let $A(3,\theta)$ be the maximum size of a spherical $\theta$-code and recall that
$$A^{\text{LP}}(3,\theta) = \inf_{g \in \mathcal{F}_{\theta}(\mathbb{S}^{2})} \frac{g(1)}{\overline{g}}$$
is the best upper bound on $A(3,\theta)$ that could be derived using Theorem 3.1 from Cohn-Zhao \cite{CohnZhao} (which appeals
to the Delsarte-Goethals-Seidel \cite{DGS} and Kabatiansky-Levenshtein \cite{KL} linear programming bounds). 
Let $\tau_{r_{j}}(r_{i})$ be the maximum number of radius $r_{i}$ spheres which can touch a radius $r_{j}$ sphere; 
$\tau_{r_{i}}(r_{j})$ is defined similarly,
namely the maximum number of radius $r_{j}$ spheres which can touch a radius $r_{i}$ sphere.

\begin{Theorem}\label{contacttheorem}
Let $\mathcal{P}$ be a sphere packing with $n_{i}$ spheres of radius $r_{i}$ for $1 \leq i \leq k$. Then,
$$k(\mathcal{P}) < 12 + \frac{\displaystyle \sum_{i \neq j} \min\left\{n_{i}\min\left\{n_{j},\tau_{r_{i}}(r_{j})\right\},n_{j} \min\left\{n_{i},\tau_{r_{j}}(r_{i})\right\}\right\} - 1.85335 \sum_{i=1}^{k}n_{i}^{2/3}}{\displaystyle \sum_{i=1}^{k}n_{i}}$$
where $\tau_{r_{i}}(r_{j}) \leq A^{\text{LP}}\left(3,\arccos\left(1 - \frac{2r_{j}^{2}}{(r_{i}+r_{j})^{2}}\right)\right)$ and
$\tau_{r_{j}}(r_{i}) \leq A^{\text{LP}}\left(3,\arccos\left(1 - \frac{2r_{i}^{2}}{(r_{i}+r_{j})^{2}}\right)\right)$.
\end{Theorem}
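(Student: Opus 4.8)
The plan is to split the edges of $\mathcal{P}$ according to the radii of the two spheres they join. Write $E(\mathcal{P})=\bigl(\bigcup_{i=1}^{k}M_{i}\bigr)\cup\bigl(\bigcup_{1\le i<j\le k}B_{ij}\bigr)$, where $M_{i}$ is the set of edges between two spheres of radius $r_{i}$ and $B_{ij}$ (with $B_{ij}=B_{ji}$) is the set of edges between a radius-$r_{i}$ sphere and a radius-$r_{j}$ sphere. Then $2|E(\mathcal{P})| = 2\sum_{i=1}^{k}|M_{i}| + \sum_{i\ne j}|B_{ij}|$, the last sum running over ordered pairs $(i,j)$, so that its coefficient matches the factor $2$ in $k(\mathcal{P})=2|E(\mathcal{P})|/n$. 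It then suffices to bound $\sum_{i}|M_{i}|$ and each $|B_{ij}|$ and divide by $n=\sum_{i}n_{i}$.

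For the monochromatic edges, rescaling the $n_{i}$ spheres of radius $r_{i}$ by $1/r_{i}$ produces a packing of $n_{i}$ unit balls in $\mathbb{R}^{3}$ whose contact number is exactly $|M_{i}|$; I would then invoke the known upper bound on the contact number of $m$ congruent balls in $\mathbb{R}^{3}$, namely $|M_{i}| < 6n_{i} - 0.926675\,n_{i}^{2/3}$, valid and strict for every $n_{i}$, with the cases $n_{i}\in\{1,2\}$ checked directly. Summing over $i$ gives $2\sum_{i=1}^{k}|M_{i}| < 12\sum_{i=1}^{k}n_{i} - 1.85335\sum_{i=1}^{k}n_{i}^{2/3}$, using $1.85335 = 2\cdot 0.926675$.

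For the bichromatic edges, fix $i\ne j$. A radius-$r_{i}$ sphere is touched by at most $\tau_{r_{i}}(r_{j})$ spheres of radius $r_{j}$ and by at most $n_{j}$ of them, so counting the edges of $B_{ij}$ from the radius-$r_{i}$ side gives $|B_{ij}|\le n_{i}\min\{n_{j},\tau_{r_{i}}(r_{j})\}$, and from the radius-$r_{j}$ side $|B_{ij}|\le n_{j}\min\{n_{i},\tau_{r_{j}}(r_{i})\}$; hence $|B_{ij}|\le\min\bigl\{\,n_{i}\min\{n_{j},\tau_{r_{i}}(r_{j})\},\ n_{j}\min\{n_{i},\tau_{r_{j}}(r_{i})\}\,\bigr\}$. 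Substituting this together with the monochromatic estimate into $2|E(\mathcal{P})| = 2\sum_{i}|M_{i}| + \sum_{i\ne j}|B_{ij}|$ and dividing by $n$ yields exactly the claimed inequality for $k(\mathcal{P})$. It remains to bound the $\tau$'s. If a sphere of radius $r_{i}$ is touched by $m$ spheres of radius $r_{j}$, the centers of the latter lie on the sphere of radius $R=r_{i}+r_{j}$ about the center of the former, and any two such centers are at Euclidean distance at least $2r_{j}$ by non-overlap, so by the law of cosines the angular distance $\phi$ between any two of them satisfies $2R^{2}(1-\cos\phi)\ge 4r_{j}^{2}$, i.e.\ $\cos\phi\le 1-\tfrac{2r_{j}^{2}}{(r_{i}+r_{j})^{2}}$. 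After normalizing $R$ to $1$, these $m$ points form a spherical $\theta_{ij}$-code on $\mathbb{S}^{2}$ with $\theta_{ij}=\arccos\!\bigl(1-\tfrac{2r_{j}^{2}}{(r_{i}+r_{j})^{2}}\bigr)$, so $m\le A(3,\theta_{ij})\le A^{\text{LP}}(3,\theta_{ij})$; since this holds for every such configuration, $\tau_{r_{i}}(r_{j})\le A^{\text{LP}}(3,\theta_{ij})$, and interchanging the roles of $r_{i}$ and $r_{j}$ gives $\tau_{r_{j}}(r_{i})\le A^{\text{LP}}\!\bigl(3,\arccos(1-\tfrac{2r_{i}^{2}}{(r_{i}+r_{j})^{2}})\bigr)$.

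The genuinely substantive ingredients are the congruent-packing contact-number bound $|M_{i}|<6n_{i}-0.926675\,n_{i}^{2/3}$, which I would cite rather than reprove, and the elementary law-of-cosines step that converts a family of radius-$r_{j}$ spheres touching a fixed radius-$r_{i}$ sphere into a spherical $\theta_{ij}$-code on $\mathbb{S}^{2}$, which is what lets the Delsarte--Goethals--Seidel / Kabatiansky--Levenshtein linear programming bound $A^{\text{LP}}$ enter; the remainder is edge bookkeeping. The step most likely to need care is precisely that bookkeeping: keeping the ordered-versus-unordered-pair convention consistent so that the $\sum_{i\ne j}$ term and the constant $12$ both carry the correct multiplicities, and verifying that strictness of the final inequality survives even when some radius class contains a single sphere---which it does, since the congruent-case bound is itself strict.
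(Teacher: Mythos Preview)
Your proposal is correct and follows essentially the same approach as the paper: the paper likewise partitions $E(\mathcal{P})$ into monochromatic pieces $E_{ii}$ and bichromatic pieces $E_{ij}$, applies the Bezdek--Reid bound $|E_{ii}|<6n_i-0.926675\,n_i^{2/3}$ to each congruent subpacking, and bounds each $|E_{ij}|$ by the same $\min\{\,n_i\min\{n_j,\tau_{r_i}(r_j)\},\,n_j\min\{n_i,\tau_{r_j}(r_i)\}\,\}$, obtaining the $\tau$-to-$A^{\mathrm{LP}}$ inequalities via the law of cosines exactly as you do. If anything, your write-up is slightly more careful than the paper's about the ordered/unordered-pair bookkeeping and the strictness of the final inequality.
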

\begin{proof}
Decompose the vertex set and edge set of $\mathcal{P}$ as
\begin{align*}
V(\mathcal{P}) &= \bigcup_{i=1}^{k} V_{i}(\mathcal{P}) := \bigcup_{i=1}^{k} \{ x_{ij} \in V(\mathcal{P}) \; | \; 1 \leq j \leq n_{i} \} \\
|V(\mathcal{P})| &= \sum_{i=1}^{k}|V_{i}(\mathcal{P})| = \sum_{i=1}^{k}n_{i} \\
E(\mathcal{P}) &= \bigcup_{i = 1}^{k} E_{ii}(\mathcal{P}) \cup \bigcup_{i \neq i'} E_{ii'}(\mathcal{P}) :=\bigcup_{i=1}^{k}\{(x_{ij},x_{ij'}) \; | \|x_{ij} - x_{ij'}\| = 2r_{i}, x_{ij},x_{ij'} \in V_{i}(\mathcal{P})\} \\
&\cup \bigcup_{i \neq i'} \{(x_{ij},x_{i'j'}) \; \big| \; \|x_{ij} - x_{i'j'} \| = r_{i} + r_{i'}, x_{ij} \in V_{i}(\mathcal{P}),x_{i'j'} \in V_{i'}(\mathcal{P})\} \\
|E(\mathcal{P})| &= \sum_{i=1}^{k}|E_{ii}(\mathcal{P})| + \frac{1}{2}\sum_{i \neq j}|E_{ij}(\mathcal{P})|
\end{align*}

Apply Theorem 1 (i) of Bezdek and the author \cite{BezdekReid} (Theorem 1.1.6 (i) in \cite{Bezdek}) to bound the cardinality
of each edge set and obtain $|E_{ii}|<6n_{i} - 0.926n_{i}^{2/3}, \forall 1 \leq i \leq k$. 
By applying a homothetic transformation with a scaling factor of either $1/r_{i}$ or $1/r_{j}$ to $\mathcal{P}$, 
it is clear that $\tau_{r_{j}}(r_{i}) = \tau_{1}(r_{i}/r_{j})$ and $\tau_{r_{i}}(r_{j}) = \tau_{1}(r_{j}/r_{i})$.
Hence, each $|E_{ij}(\mathcal{P})| + |E_{ji}(\mathcal{P})|$
counts the number of edges between spheres of radius $r_{i}$ and $r_{j}$, and thus by the above homothetic transformation
of either type, counts the number of edges between spheres of radius $1$ and radius $r_{j}/r_{i}$ or spheres
of radius $r_{i}/r_{j}$ and radius $1$, respectively.
Therefore, by the law of cosines applied to the geometric embedding of 
the contact graph of two spheres of radius $r_{j}/r_{i}$ and a sphere of radius $1$, and the 
geometric embedding of the contact graph of two spheres of radius $r_{i}/r_{j}$ and a sphere of radius $1$,
we obtain the $\theta$-code size desired in each case:
\begin{align*}
\theta_{i}^{j} &= \arccos\left(1 - \frac{2r_{j}^{2}}{(r_{i} + r_{j})^{2}}\right) \\
\theta_{j}^{i} &= \arccos\left(1 - \frac{2r_{i}^{2}}{(r_{i} + r_{j})^{2}}\right)
\end{align*}
Hence, $\tau_{1}(r_{i}/r_{j}) = A(3,\theta_{j}^{i}) \leq A^{\text{LP}}(3,\theta_{j}^{i})$ and 
$\tau_{1}(r_{j}/r_{i})=A(3,\theta_{i}^{j}) \leq A^{\text{LP}}(3,\theta_{i}^{j})$.
From this we observe, from basic restrictions on the number of spheres of varying radii, that
\begin{align*}
|E_{ij}(\mathcal{P})|=|E_{ji}(\mathcal{P}| &< \min\left\{n_{i}\min\left\{n_{j},\tau_{r_{i}}(r_{j})\right\},n_{j} \min\left\{n_{i},\tau_{r_{j}}(r_{i})\right\}\right\} \\
&=\min\left\{n_{i}\min\left\{n_{j},A^{\text{LP}}(3,\theta_{i}^{j})\right\},n_{j} \min\left\{n_{i},A^{\text{LP}}(3,\theta_{j}^{i})\right\}\right\}
\end{align*}
Cumulatively, these observations combined with the definition of the average kissing number $k(\mathcal{P})$ prove the theorem.
\end{proof}

\pagebreak

In practice, it is difficult to compute $A^{\text{LP}}(3,\theta)$ explicitly, but a weaker bound may be provided by nonnegative
linear combinations of Gegenbauer polynomials $C_{k}^{\frac{n}{2}-1}$ as shown by Schoenbergs characterization
of continuous positive definite functions \cite{Schoenberg}. Gegenbauer polynomials, or ultraspherical polynomials, are
a special case of the Jacobi polynomials, or hypergeometric polynomials, and for algorithmic implementations of the
following theorem we can follow \cite{CohnZhao} and set
$$g(t) = \sum_{k=0}^{\infty} c_{k}C_{k}^{\frac{n}{2}-1}(t), \overline{g}=c_{0}$$
For algorithmic implementation, tighter bounds on $A(n,\theta)$ may be found using de Laat-de Oliveira Filho-Vallentin
semidefinite programming bounds \cite{LOV}, or Cohn-Elkies error-correcting codes bounds \cite{CohnElkies}.
Furthermore, Theorem \ref{contacttheorem} can be considered a packing dependent generalization of the
celebrated Kuperberg-Schramm bound on the supremal average 
kissing number $k$ of a sphere packing in $\mathbb{R}^3$ \cite{KuperbergSchramm}, which says that 
$12.566 < k := \sup_{\mathcal{P} \hookrightarrow \mathbb{R}^3} k(P) < 8+4\sqrt{3} \approx 14.928$. Future
research goals include the algorithmic implementation of Theorem \ref{contacttheorem} to compare with the Kuperberg-Schramm bound.

We now state Theorem \ref{contacttheorem} in terms of contact numbers which follows directly from the definition
of $k(\mathcal{P})$, thus generalizing Theorem 1 (i) of Bezdek and the author \cite{BezdekReid} (Theorem 1.1.6 (i) in \cite{Bezdek}),
which states that if $\mathcal{P}$ is a packing of $n$ congruent spheres then $C(\mathcal{P}) < 6n - 9.26n^{2/3}$.

\begin{Corollary}
Let $\mathcal{P}$ be a sphere packing with $n_{i}$ spheres of radius $r_{i}$ for $1 \leq i \leq k$. Then,
$$C(\mathcal{P}) < \sum_{i=1}^{k} (6n_{i} - 0.926675n_{i}^{2/3}) + \frac{1}{2}\sum_{i \neq j} \min\left\{n_{i}\min\left\{n_{j},\tau_{r_{i}}(r_{j})\right\},n_{j} \min\left\{n_{i},\tau_{r_{j}}(r_{i})\right\}\right\}$$
where $\tau_{r_{i}}(r_{j}) \leq A^{\text{LP}}\left(3,\arccos\left(1 - \frac{2r_{j}^{2}}{(r_{i}+r_{j})^{2}}\right)\right)$ and
$\tau_{r_{j}}(r_{i}) \leq A^{\text{LP}}\left(3,\arccos\left(1 - \frac{2r_{i}^{2}}{(r_{i}+r_{j})^{2}}\right)\right)$.
\end{Corollary}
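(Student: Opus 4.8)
The plan is to deduce the Corollary directly from Theorem \ref{contacttheorem}, so that no new geometry is needed: it is purely a matter of rescaling by the number of spheres. First I would recall from the Lexicon that $C(\mathcal{P}) = |E(\mathcal{P})|$ and $k(\mathcal{P}) = 2|E(\mathcal{P})|/n$ with $n = \sum_{i=1}^{k} n_{i}$, so that
\[
C(\mathcal{P}) = |E(\mathcal{P})| = \tfrac{1}{2}\Bigl(\sum_{i=1}^{k} n_{i}\Bigr) k(\mathcal{P}).
\]
Since every nonempty packing has $\sum_{i} n_{i} \geq 1 > 0$, multiplying the strict inequality of Theorem \ref{contacttheorem} through by the positive quantity $\tfrac{1}{2}\sum_{i} n_{i}$ preserves strictness. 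On the right-hand side the denominator $\sum_{i} n_{i}$ of the displayed fraction cancels against this factor, the leading term becomes $6\sum_{i} n_{i}$, and $\tfrac{1}{2}\cdot 1.85335 = 0.926675$; regrouping $6\sum_{i} n_{i} - 0.926675\sum_{i} n_{i}^{2/3}$ as $\sum_{i=1}^{k}(6n_{i} - 0.926675\,n_{i}^{2/3})$ then yields exactly the claimed bound, with the two side conditions $\tau_{r_{i}}(r_{j}) \leq A^{\text{LP}}(3,\arccos(1 - 2r_{j}^{2}/(r_{i}+r_{j})^{2}))$ and $\tau_{r_{j}}(r_{i}) \leq A^{\text{LP}}(3,\arccos(1 - 2r_{i}^{2}/(r_{i}+r_{j})^{2}))$ carried over verbatim from the theorem.

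A second, slightly more self-contained route would be to re-run the edge-set decomposition from the proof of Theorem \ref{contacttheorem}: write $|E(\mathcal{P})| = \sum_{i=1}^{k}|E_{ii}(\mathcal{P})| + \tfrac{1}{2}\sum_{i\neq j}|E_{ij}(\mathcal{P})|$, bound each monochromatic term by Theorem 1 (i) of Bezdek and the author \cite{BezdekReid} to get $|E_{ii}(\mathcal{P})| < 6n_{i} - 0.926675\,n_{i}^{2/3}$, bound each bichromatic term via the law-of-cosines reduction to spherical $\theta$-codes (using the angles $\theta_{i}^{j},\theta_{j}^{i}$ and the elementary counting of how many spheres of one radius can touch a fixed sphere) to get $|E_{ij}(\mathcal{P})| < \min\{n_{i}\min\{n_{j},\tau_{r_{i}}(r_{j})\},\,n_{j}\min\{n_{i},\tau_{r_{j}}(r_{i})\}\}$, and sum. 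I would present the first route as the main argument, since the Corollary is advertised as following directly from the definition of $k(\mathcal{P})$, and keep the second only as a remark.

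The only real obstacle here is \emph{bookkeeping} rather than mathematics. One must keep straight the factor $\tfrac{1}{2}$ in front of the mixed-radius sum — it is exactly the double count of $E_{ij}(\mathcal{P})$ against $E_{ji}(\mathcal{P}) = E_{ij}(\mathcal{P})$ — and be consistent about the constant $0.926675$, which is the precise value underlying the coefficient $1.85335 = 2 \times 0.926675$ appearing in the statement of Theorem \ref{contacttheorem} (the proof of that theorem quotes the rounded value $0.926$). Once these are tracked and one notes that scaling a strict inequality by the positive number $\tfrac{1}{2}\sum_{i} n_{i}$ keeps it strict, the Corollary follows.
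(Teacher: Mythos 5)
Your first route is exactly the paper's argument: the Corollary is stated as following directly from Theorem \ref{contacttheorem} and the identity $C(\mathcal{P}) = \tfrac{1}{2}\bigl(\sum_{i} n_{i}\bigr)k(\mathcal{P})$, and your bookkeeping (cancellation of $\sum_i n_i$, $\tfrac{1}{2}\cdot 1.85335 = 0.926675$, strictness preserved under multiplication by a positive quantity) is correct. No gap; the proposal matches the paper's approach.
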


\section{Upper Bounds on Infinite Sphere Packings' Densities}
We define the locally maximal tetrahedron $\Delta(r_{i},r_{j},r_{k},r_{l})$ to be the convex hull
of the center points of spheres of radius $r_{i},r_{j},r_{k},r_{l}$, which are maximally contracted;
i.e., there does not exist a non-trivial contractive mapping of the spheres. We can use the geometric
structure of the locally maximal tetrahedron to calculate an upper bound on the density of an infinite sphere
packing of distinct radii $r_{i}, i \in S \subseteq \mathbb{N}$, by defining 
$\Delta(r_{i},r_{j},r_{k},r_{l}) = \conv\{\vec{\omega_{i}},\vec{\omega_{j}},\vec{\omega_{k}},\vec{\omega_{l}}\}$
and intersecting spheres of the associated radii at each vertex of the locally maximal tetrahedron.
By connecting spherical geometry and dihedral angles we arrive at the following theorem which holds
for any sphere packing $\mathcal{P}$ in $\mathbb{R}^3$ whether or not it has finitely
many distinct radii, or infinitely many distinct radii, although the theorem does not have a realizable
algorithmic implementation in the case of infinitely many distinct radii.

\begin{Theorem}
Let $\mathcal{P}$ be a sphere packing in $\mathbb{R}^3$ with distinct radii $r_{i}, i \in S \subseteq \in\mathbb{N}$, and let $\delta_{\text{max}}(\mathcal{P}_{\Delta(r_{i},r_{j},r_{k},r_{l})})$
be the maximal packing density of $\Delta(r_{i},r_{j},r_{k},r_{l})$ in $\mathbb{R}^3$. Then,
$$\delta(\mathcal{P}) < 2 \max_{r_{i}\leq r_{j} \leq r_{k} \leq r_{l}} \delta_{\text{max}}(\mathcal{P}_{\Delta(r_{i},r_{j},r_{k},r_{l})}) \left(\left[\sum_{m=i,j,k,l} r_{m}^{3}(A_{m} + B_{m} + C_{m} - \pi)\right] \bigg/ \|\vec{\omega_{2}} \cdot (\vec{\omega_{3}} \times \vec{\omega_{4}})\|\right),$$
where $\Delta(r_{i},r_{j},r_{k},r_{l}) = \conv\{\vec{\omega_{i}},\vec{\omega_{j}},\vec{\omega_{k}},\vec{\omega_{l}}\}$ and
\begin{align*}
 U_{ijk} &= \vec{\omega_{j}} \times \vec{\omega_{k}} \\
 U_{ikl} &= \vec{\omega_{k}} \times \vec{\omega_{l}} \\
 U_{ijl} &= \vec{\omega_{j}} \times \vec{\omega_{l}} \\
 U_{jkl} &= (\vec{\omega_{k}} - \vec{\omega_{j}}) \times (\vec{\omega_{l}} - \vec{\omega_{j}})
\end{align*}
\begin{align*}
A_{i}&=A_{l} = \arccos\left(\frac{U_{ijk} \cdot U_{ikl}}{\|U_{ijk}\| \|U_{ikl}\|}\right) \;\;\;\;\;\;\;\;\;\; A_{j}=A_{k} = \arccos\left(\frac{U_{ijk} \cdot U_{jkl}}{\|U_{ijk}\| \|U_{jkl}\|}\right) \\
B_{i}&=B_{j} = \arccos\left(\frac{U_{ijk} \cdot U_{ijl}}{\|U_{ijk}\| \|U_{ijl}\|}\right) \;\;\;\;\;\;\;\;\;\; B_{k}=B_{l} = \arccos\left(\frac{U_{ikl} \cdot U_{jkl}}{\|U_{ikl}\| \|U_{jkl}\|}\right) \\
C_{i}&=C_{k} = \arccos\left(\frac{U_{ikl} \cdot U_{ijl}}{\|U_{ikl}\| \|U_{ijl}\|}\right) \;\;\;\;\;\;\;\;\;\; C_{j}=C_{l} = \arccos\left(\frac{U_{ijl} \cdot U_{jkl}}{\|U_{ijl}\| \|U_{jkl}\|}\right).
\end{align*}
\end{Theorem}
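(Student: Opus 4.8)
The plan is to run a Rogers-type simplex argument, now with unequal radii. First I would reduce to a saturated packing (adjoining balls only increases the density), so that the radical/power Voronoi diagram of the centres $x_{ij}$ has all cells bounded and tiles $\R^{3}$; its dual, a Rogers-type (weighted Delaunay) triangulation, then tiles $\R^{3}$ by tetrahedra $T$ whose vertices are centres of $\mathcal{P}$, any two adjacent vertices $v_{m},v_{m'}$ carrying non-overlapping balls so that $\|v_{m}-v_{m'}\|\ge\rho_{m}+\rho_{m'}$. The point of this subdivision is the elementary identity $\vol\!\big(x_{ij}+r_{i}\mathbb{S}^{2}\big)=\tfrac13 r_{i}^{3}\sum_{T\ni x_{ij}}\Omega_{m}(T)$, where $\Omega_{m}(T)$ denotes the interior solid angle of $T$ at the vertex $v_{m}=x_{ij}$ and the solid angles of the tetrahedra around a fixed interior vertex sum to $4\pi$. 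Summing over all balls, swapping the order of summation, and passing to densities over large balls gives
$$\delta(\mathcal{P})\;=\;\lim\;\frac{\sum_{T}\sum_{m:\,v_{m}\in T}\tfrac13\rho_{m}^{3}\Omega_{m}(T)}{\sum_{T}\vol(T)}\;\le\;\sup_{T}\frac{\sum_{m=1}^{4}\tfrac13\rho_{m}^{3}\Omega_{m}(T)}{\vol(T)},$$
the supremum ranging over tetrahedra $T$ whose four vertices carry a non-overlapping $4$-tuple of balls with radii among the $r_{i}$.

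Next I would convert the local quantity into the stated formula. Girard's theorem identifies the solid angle at $v_{m}$ with the spherical excess $A_{m}+B_{m}+C_{m}-\pi$ of the geodesic triangle cut out on a small sphere about $v_{m}$, whose three angles are the dihedral angles of $T$ along the three edges through $v_{m}$; writing each dihedral angle through the outward face normals $U_{ijk},U_{ikl},U_{ijl},U_{jkl}$ produces exactly the displayed $\arccos$ expressions, and the shared-edge incidences force the coincidences $A_{i}=A_{l}$, $A_{j}=A_{k}$, and the rest recorded in the statement. Combining this with $\vol(T)=\tfrac16\|\vec{\omega_{2}}\cdot(\vec{\omega_{3}}\times\vec{\omega_{4}})\|$ (after translating one vertex to the origin) turns the bracketed ratio into $2\sum_{m}r_{m}^{3}(A_{m}+B_{m}+C_{m}-\pi)\big/\|\vec{\omega_{2}}\cdot(\vec{\omega_{3}}\times\vec{\omega_{4}})\|$, the factor $2$ being $\tfrac13\big/\tfrac16$.

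It remains to replace a general Delaunay tetrahedron by a locally maximal one $\Delta(r_{i},r_{j},r_{k},r_{l})$ and to account for the factor $\delta_{\text{max}}(\mathcal{P}_{\Delta(r_{i},r_{j},r_{k},r_{l})})$. Here I would argue that the ratio above is non-decreasing under a contractive displacement of the four centres (the unequal-radius analogue of Rogers' lemma), so that for each fixed choice of radii the supremum is attained at a maximally contracted configuration, i.e.\ at $\Delta(r_{i},r_{j},r_{k},r_{l})$; since the radii lie among the countably many $r_{i}$ one may then take the maximum over $r_{i}\le r_{j}\le r_{k}\le r_{l}$. Finally, I would account for the factor $\delta_{\text{max}}(\mathcal{P}_{\Delta(r_{i},r_{j},r_{k},r_{l})})<1$ by observing that the Delaunay tetrahedra approaching this extreme cannot fill $\R^{3}$ more efficiently than congruent copies of the solid tetrahedron $\Delta(r_{i},r_{j},r_{k},r_{l})$ can be packed, which also yields the strict inequality.

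The main obstacle is precisely this last reduction. The equal-radius monotonicity is already Rogers' delicate simplex inequality, and with four independent radii the excess-to-volume ratio is far less transparent; I expect one must either establish a convexity/Schur-type property of the spherical-excess functional under edge contraction, or bound the ratio directly from the dihedral-angle data and extract the $\delta_{\text{max}}$ factor from a clean packing/covering comparison between the Delaunay subdivision of $\mathcal{P}$ and tilings of space by the solid tetrahedra $\Delta(r_{i},r_{j},r_{k},r_{l})$. The measure-theoretic bookkeeping in the first paragraph (boundary effects in the density limit, and degenerate cells handled by an arbitrarily small perturbation) is routine by comparison.
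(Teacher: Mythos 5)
Your strategy is essentially the paper's: decompose space into tetrahedra whose vertices are sphere centres, write each wedge volume as $\tfrac13 r_{m}^{3}$ times the spherical excess $A_{m}+B_{m}+C_{m}-\pi$ via Girard's theorem, divide by $\vol(\Delta)=\tfrac16\|\vec{\omega_{2}}\cdot(\vec{\omega_{3}}\times\vec{\omega_{4}})\|$ to produce the factor $2$, and then maximize over locally maximal tetrahedra $\Delta(r_{i},r_{j},r_{k},r_{l})$. In fact you supply more than the paper does: the saturation step, the power/weighted Delaunay subdivision, and the solid-angle bookkeeping are nowhere made explicit there; the paper's proof consists of the wedge-volume identity, a single sentence asserting that $\delta(\mathcal{P})$ is bounded by the maximum over locally maximal tetrahedra of $\delta_{\text{max}}(\mathcal{P}_{\Delta(r_{i},r_{j},r_{k},r_{l})})$ times the covered fraction, and the remark that the remaining apparatus is ``self evident.''

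That said, the step you yourself flag as the main obstacle is a genuine gap, and it is precisely the step the paper also leaves unproved: the reduction from an arbitrary (weighted Delaunay) tetrahedron to a maximally contracted one, i.e.\ the unequal-radius analogue of Rogers' simplex monotonicity, together with the extraction of the factor $\delta_{\text{max}}$. Your heuristic for the latter does not close it: since $\delta_{\text{max}}(\mathcal{P}_{\Delta})\leq 1$, multiplying by it makes the claimed bound \emph{stronger}, whereas the observation that near-extremal tetrahedra cannot cover more than a $\delta_{\text{max}}$-fraction of space only helps if the complementary region contributes negligible density, which is false in general; leftover space normally weakens, not sharpens, a density bound obtained from a local ratio. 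So both the contraction monotonicity and the $\delta_{\text{max}}$ factor need real arguments; your write-up locates the difficulty correctly but, like the paper, does not resolve it.
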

\begin{proof}

Observe that
$$\vol((\vec{\omega_{m}}+r_{m}\mathbb{S}^{2}) \cap \Delta(r_{i},r_{j},r_{k},r_{l})) = \frac{r_{m}}{3} \area(\partial(\vec{\omega_{m}} + r_{m}\mathbb{S}^2) \cap \Delta(r_{i},r_{j},r_{k},r_{l})).$$
is the volume of a spherical wedge intersecting a sphere of radius $r_{m}$ and a locally maximal tetrahedron.
By calculating the volume of each of these spherical wedges in terms of the spherical area of a triangle on $r_{m}\mathbb{S}^2$
and observing that the supremum of $\delta(\mathcal{P})$ is less than the supremal density of a locally maximal tetrahedron $\Delta(r_{i},r_{j},r_{k},r_{l})$, we obtain
$$\delta(\mathcal{P}) < \max_{r_{i} \leq r_{j} \leq r_{k} \leq r_{l}} \frac{\delta_{\text{max}}(\mathcal{P}_{\Delta(r_{i},r_{j},r_{k},r_{l})}) \displaystyle \sum_{m=i,j,k,l} \vol\left((\vec{\omega_{m}} + r_{m} \mathbb{S}^2) \cap \Delta(r_{i},r_{j},r_{k},r_{l})\right)}{\vol(\Delta(r_{i},r_{j},r_{k},r_{l}))}$$
The apparatus for calculating the upper bound is self evident from the definition of dihedral angles,
tetrahedral volumes, and areas of spherical triangles.
\end{proof}

\pagebreak

\end{document}